\begin{document}

\markboth{Zhiqing Yang}
{Instructions for Typesetting Camera-Ready Manuscripts}

\title{Regional knot invariants}

\author{Zhiqing Yang}

\address{School of Mathematical Sciences, Dalian University of Technology, P.R.China}

\maketitle

\begin{abstract}
In this paper, a regional knot invariant is constructed. Like the Wirtinger presentation of a knot group, each planar region contributes a generator, and each crossing contributes a relation. The invariant is call a tridle of the link. As in the quandle theory, one can define Alexander quandle and get Alexander polynomial from it. For link diagram, one can also define a linear tridle and its presentation matrix. A polynomial invariant can be derive from the matrix just like the Alexander polynomial case.
\end{abstract}

\keywords{Regional knot invariant, Tridle, Alexander polynomial, Presentation matrix, Linear tridle}

\ccode{Mathematics Subject Classification 2000: 57M25, 57M27}

\section{Introduction}	

Knots and links can be represented by planar diagrams. Knot invariants can be constructed from the planar diagrams. For example, in 1928, J.W. Alexander {\cite{A}} discovered the famous Alexander polynomial. It has many connections with other topological invariants. More than 50 years later, in 1984 Vaughan Jones {\cite{J}}  discovered the Jones polynomial. Soon, the HOMFLYPT polynomial {\cite{H}} and Kauffman polynomial were found. They all can be defined using planar diagrams of links {\cite{K}}. Other invariants like knot groups, quandles are also related to planar diagrams. In most of the constructions, one uses the combinatorial information of the overcrossing arcs, undercrossing arcs and crossings. For example, the Wirtinger presentation of knot group assign one generator for each arc, one relation for each crossing {\cite{R}}.

This paper introduces some regional invariants. A link diagram divides the plane into many regions, then for each region one can introduce a generator, and each crossing one can introduce a relator. Then under certain conditions one can get a knot invariant. In this case, a relator contains four generators, any three  generators uniquely determine the fourth generator. This is a ternary relation. We call the invariant a tridle.

Tridles are connected to many other knot invariants, for example, knot quandle. Their construction are similar. Furthermore, knot quandle has a linear version. It is called the Alexander quandle. One can also get a presentation matrix for the Alexander quandle. For different link diagrams one get different matrices. Then one can
introduce some equivalence relation. The equivalence class of such matrices is a knot invariant. One can also define elementary ideals from presentation matrices, and get the Alexander polynomial. Similar things happen for tridles. Tridle also has a linear version. We call it a linear tridle. One can then get a presentation matrix for a linear tridle. For different link diagrams one get different matrices. Then some equivalence relation are introduced. The equivalence class of such matrices is also a knot invariant. One can then define elementary ideals from presentation matrices. However, since the matrix has two variables $x,y$, the polynomial is a two variable polynomial.  Calculation show for many easy knots the polynomial is a polynomial in $xy$. We do not know whether this is always true. The topological meaning of the polynomial is not clear to the author.

\section{Regional invariant}

We propose the following construction. A planar knot diagram $D$ divides the plane into many regions. Like in the Wirtinger presentation case, we assign a generator for each region, and at each crossing, we add one relation. An easy choice is a linear equation. In Fig. \ref{fig1}, if the regions are assigned generators $a,b,c,d$, the relation at the crossing is $r_1: a+bx+cxy+dy=0$, where $x,y$ are two fixed  variables. The resulting algebraic structure is denoted as $LT(D)=\{a,b,c,d \cdots \mid r_1,r_2 \cdots  \}$. We call it a linear tridle of $D$. One can try the more general equation $ax+by+cz+dw=0$, however, calculation (Reidemeister invariance) shows that the condition $xz=yw$ must be satisfied. Then it is equivalent to our setting. Alexander originally used $ax-bx+c-d=0$ {\cite{A}}, which is a special case of our choice.

\begin{figure}[th]
\begin{tikzpicture}[scale=.75,mine/.style={font=\itshape\large}]
\draw[thick]  [->] (0,0) -- (4,0);
\draw[thick]   (2,-2) -- (2,-0.3);
\draw[thick]   [->] (2,0.3) -- (2,2);
\draw (3,1) node[mine] {$a$};
\draw (1,1) node[mine] {$b$};
\draw (1,-1) node[mine] {$c$};
\draw (3,-1) node[mine] {$d$};

\draw[thick]  [->] (5,0) -- (9,0);
\draw[thick]   [<-] (7,-2) -- (7,-0.3);
\draw[thick]   (7,0.3) -- (7,2);
\draw (8,1) node[mine] {$b$};
\draw (6,1) node[mine] {$a$};
\draw (6,-1) node[mine] {$d$};
\draw (8,-1) node[mine] {$c$};
\end{tikzpicture}
\caption{New notations in this paper.}
\label{fig1}
\end{figure}
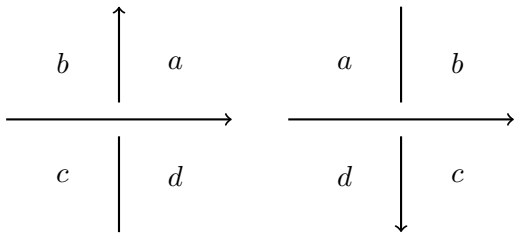

In the equation $a+bx+cxy+dy=0$, let $x,y$ be invertible. Then if one has any three generators of the set $\{a,b,c,d\}$, the fourth generator will be uniquely determined. Fixing a field $S$, $LT(D)$ is defined to be the minimal set satisfies the following properties.

(1) Each region contributes a generator $a_i$, and $LT(D)$ contains all the $a_i$.

(2) The generators satisfy the relations defined at the crossings.

(3) (Closure property) For any equation $\alpha_1+x\alpha_2 +xy\alpha_3 +y \alpha_4=0$, if  three of the $\alpha_i $ lies in $ LT(D)$, then the solution of fourth lies in p$ LT(D)$.

Such $LT(D)$ exists and can be constructed as follows. Let $S[x,x^{-1},y,y^{-1}]$ be the Laurent polynomial ring. Denote $M=S[x,x^{-1},y,y^{-1}](a_1,a_2,\cdots )$ the free module generated by $a_1,a_2,\cdots$, and $T$ be the collection of the relations. $N=M/T$ denote the quotient abelian group. Without lose of generality, the image of $a_i$ will be still called $a_i$. Inside $N$, let $R_1=\{a_1,a_2, \cdots \}$. Suppose $R_n$ is defined. For any equation $\alpha_1+x\alpha_2 +xy\alpha_3 +y \alpha_4=0$, if  three of the $\alpha_i \in \cup_{i=1}^n R_i$, then $R_{n+1}$ is the collection of the solution of the left $\alpha_j$. $R_{n+1}$ is the set of those solutions, and is a finite set. Then $LT(D)=\cup_{i=1}^{\infty} R_i$.

From the equation $\alpha_1+x\alpha_2 +xy\alpha_3 +y \alpha_4=0$, we can solve $\alpha_1=F_1(\alpha_2,\alpha_3,\alpha_4)$, $\alpha_2=F_2(\alpha_3,\alpha_4,\alpha_1)$,  $\alpha_3=F_3(\alpha_4,\alpha_1,\alpha_2)$ and $\alpha_4=F_4(\alpha_1,\alpha_2,\alpha_3)$. A map $f:LT\to LT'$ is a homomorphism if $f(F_k(a,b,c))=F_k(f(a),f(b),f(c))$ for any $a,b,c\in LT$, $k=1,2,3,4$. Isomorphism can be defined is the usual way.

Like the Tietze transformation for groups, one can define Tietze transformation for presentation of linear tridles.

\begin{definition}
Given a tridle $LT=\{X  \mid R  \}$, one can apply the following operations.

\noindent {\bf T1:} Adding a superfluous relation

$LT=\{X  \mid T  \}$ becomes  $LT'=\{X  \mid T \cup \{r \} \}$, where  $r\in N(T)$ the normal closure of the relations, i.e., $r$ is a consequence of $T$;

\noindent {\bf T2:} Removing a superfluous relation

$LT'=\{X  \mid T \cup \{r \} \}$ becomes $LT=\{X  \mid T  \}$ where $r\in N(T)$, i.e., $r$ is a consequence of $T$;

\noindent {\bf T3:} Adding a superfluous generator

$LT=\{X  \mid T \}$ becomes $LT'=\{X'  \mid T'  \}$, where $X' = X\cup \{ z\}$, $z$ is a new symbol not in $X$, and $T' = T\cup\{z=F_k(\alpha_1,\alpha_2,\alpha_3) \}$, where $k\in \{1,2,3,4\}$ and $\alpha_i \in R$;

\noindent {\bf T4:} Removing a superfluous generator

$LT'=\{X'  \mid T'  \}$ becomes $LT=\{X  \mid T \}$, where $X' = X - \{ z\}$, and $T' = T\cup\{z=F_k(\alpha_1,\alpha_2,\alpha_3) \}$, where $k\in \{1,2,3,4\}$ and $\alpha_i \in R$.

\end{definition}

\begin{lemma}
Tietze transformations do not change a tridle.
\end{lemma}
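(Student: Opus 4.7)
The plan is to reduce the claim to checking only T1 and T3, since T2 is the inverse of T1 and T4 is the inverse of T3, and then to exhibit the isomorphisms explicitly at the level of the module $N = M/T$ constructed in the excerpt before passing to the closure $LT = \bigcup_i R_i$. Throughout I will write $S = S[x,x^{-1},y,y^{-1}]$ for the coefficient ring.

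For T1, suppose $r \in N(T)$ is a consequence of $T$. Since the defining relators are linear, the normal closure here is just the $S$-submodule of $M$ generated by $T$. The hypothesis says $r$ lies in this submodule, so the submodules generated by $T$ and by $T \cup \{r\}$ coincide. Hence $M/T = M/(T \cup \{r\})$ literally as quotients, and the iterative construction $R_1 \subset R_2 \subset \cdots$ depends only on this quotient together with the distinguished generators $\{a_i\}$, which are unchanged. So $LT = LT'$ as tridles, and T2 follows by reading the argument backwards.

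For T3, let $N = M/T$ and $N' = M'/T'$ with $M' = M \oplus S\cdot z$ and $T' = T \cup \{z - F_k(\alpha_1,\alpha_2,\alpha_3)\}$. Define $\varphi : N \to N'$ by the inclusion of $M$ into $M'$; it is well defined because every element of $T$ already lies in $T'$. Define $\psi : N' \to N$ as the $S$-linear extension of $a_i \mapsto a_i$ and $z \mapsto F_k(\alpha_1,\alpha_2,\alpha_3)$; it is well defined because every old relation is trivially preserved and the new relation $z - F_k(\alpha_1,\alpha_2,\alpha_3)$ maps to $F_k(\alpha_1,\alpha_2,\alpha_3) - F_k(\alpha_1,\alpha_2,\alpha_3) = 0$. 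A direct check shows $\psi\varphi = \mathrm{id}_N$ and $\varphi\psi = \mathrm{id}_{N'}$, so $N \cong N'$ as $S$-modules. Since the operations $F_k$ are themselves $S$-linear expressions in the arguments, both $\varphi$ and $\psi$ automatically commute with every $F_k$, hence are tridle homomorphisms in the sense defined just before the lemma.

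The one thing left to verify is that these module isomorphisms restrict to bijections between the closures $LT \subset N$ and $LT' \subset N'$. Write $R_n$ and $R_n'$ for the strata on the two sides. A straightforward induction on $n$ shows $\varphi(R_n) \subset R_n'$ and $\psi(R_n') \subset R_n \cup \{\varphi^{-1}(z)\}$; the base case is the matching of generators (noting that $z \in R_2'$ because $z = F_k(\alpha_1,\alpha_2,\alpha_3)$ with $\alpha_i$ already in $R_1' \cup R_2'$), and the inductive step uses that $\varphi, \psi$ commute with each $F_k$. Assembling these gives the required isomorphism $LT \cong LT'$, and T4 is the inverse. The main pitfall I foresee is keeping the bookkeeping of the closure strata honest, in particular checking that the $\alpha_i$ appearing in the defining relation for $z$ lie in some $R_m$ so that $z$ enters the closure at a finite stage on the $LT'$ side; once this is confirmed the algebraic verification is routine because everything is linear over $S$.
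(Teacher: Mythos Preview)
Your approach is essentially the paper's: first observe that the quotient modules $N=M/T$ and $N'=M'/T'$ coincide, then verify that the closures $LT$ and $LT'$ match under this identification. The paper simply phrases the second step as two set inclusions $LT\subset LT'$ and $LT'\subset LT$ rather than via explicit maps $\varphi,\psi$, but the content is the same.

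One bookkeeping slip to fix: your asserted containment $\psi(R_n')\subset R_n\cup\{\varphi^{-1}(z)\}$ is not correct. Once $z\in R_1'$, at the very next stage you can form elements such as $F_j(z,a,b)$ with $a,b\in R_1'$, and $\psi$ sends this to $F_j(\psi(z),a,b)$, which need not lie in $R_2$ (nor equal $\psi(z)$) unless $\psi(z)$ already sits in $R_1$. The correct statement---which is exactly the pitfall you flag at the end, and exactly what the paper does---is to choose a single $m$ with all $\alpha_i\in\bigcup_{j\le m}R_j$, so that $\psi(z)=F_k(\alpha_1,\alpha_2,\alpha_3)\in R_{m+1}$, and then prove by induction that $\psi(R_n')\subset\bigcup_{j\le n+m}R_j$. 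This uniform shift by $m$ is enough to conclude $\psi(LT')\subset LT$, and together with the easy inclusion $\varphi(R_n)\subset R_n'$ (hence $LT\subset LT'$) finishes the argument.
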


\begin{proof}
In the construction of a linear tridle, the quotient groups $N=M/T$ and $N'=M'/T'$ are isomorphic.

For $T_1$ or $T_2$ invariance, in each step, $R_i\cong R_i'$, hence $LT\cong LT'$.

For $T_3$ invariance, in the first step, $R_1\subset R_1'$, hence $R_i\subset R_i'$ and $LT\subset LT'$. However, $z=F_k(\alpha_1,\alpha_2,\alpha_3)$ and $\alpha_i \in R$. Suppose $\alpha_i \in R_{k_i}$, and $n>k_i$. Then  $\alpha_i \in R_n$, hence $z\in R_{n+1}$. So we have $R_1'\subset R_{n+1}$, hence $R_i'\subset R_{n+i}$ and $LT'\subset LT$. Therefor $LT=LT'$.

$T_4$ is the inverse of $T_3$, the proof of invariance is the same.
\end{proof}

\begin{theorem}
$LT(D)$ is a knot invariant.
\end{theorem}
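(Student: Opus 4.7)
The plan is to verify invariance under the three Reidemeister moves R1, R2, R3, invoking the previous lemma that Tietze transformations do not change the tridle. For each move one has a local diagrammatic change; outside a small disk the two diagrams agree, so their region sets and crossing relations agree except for the local change. It suffices to exhibit a sequence of Tietze moves (T1--T4) converting one local presentation into the other.

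For R1 (kink addition), the new crossing adds one region $c$ and three ``old'' corners, two of which in fact lie in the same ambient region. The crossing relation $\alpha_1+x\alpha_2+xy\alpha_3+y\alpha_4=0$ lets us solve $c=F_k(\alpha_i,\alpha_j,\alpha_k)$ for the appropriate $k$, since $x$ and $y$ are invertible. Thus $c$ is a superfluous generator and the crossing is a superfluous relation; T4 followed by T2 removes them. The four sub-cases (two sign conventions for the loop, two choices of over/under) are each a one-line computation.

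For R2 (poke), two new crossings create two new regions. The first crossing relation expresses one new generator as $F_k$ of pre-existing ones; substituting into the second crossing relation either determines the remaining new generator (again by the invertibility of $x$ and $y$) or collapses to a tautology among pre-existing generators that is a consequence of the other relations of the diagram, and so is removable by T2. Each orientation/crossing-sign subcase reduces to the same pattern.

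The main obstacle is R3, the triangle move. Here the three crossings before the move and the three crossings after the move involve the same six peripheral regions $a_1,\dots,a_6$, but the central region is repositioned from $a_0$ to $a_0'$. The strategy is: use T3 to introduce the ``post-move'' central generator $a_0'$ as a function of peripheral generators via one of the post-move relations, then show that each of the remaining two post-move relations is a consequence of the three pre-move relations together with the defining equation of $a_0'$, and conversely. This is a purely algebraic verification in $S[x^{\pm1},y^{\pm1}]$ that the two rank-three systems of linear equations cut out the same affine variety in the $a_i$; it rests on the condition $xz=yw$ already noted in the construction, which is what makes the three pre-move relations algebraically compatible with the three post-move relations. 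Once this check is carried out for one over/under configuration of R3, the remaining configurations follow by symmetric arguments, and the combination of T1--T4 established in each case shows $LT(D)\cong LT(D')$ for any two diagrams $D,D'$ of the same link.
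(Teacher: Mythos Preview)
Your proposal is correct and follows essentially the same route as the paper: establish Reidemeister invariance by exhibiting Tietze transformations, with the R3 case handled by eliminating the central triangular region on each side and verifying that the residual relations among the six peripheral generators coincide. The only notable difference is that the paper invokes Polyak's minimal generating set of Reidemeister moves, so it checks just one oriented version each of $\Omega_{2a}$ and $\Omega_{3a}$ rather than appealing to ``symmetric arguments'' for the remaining subcases; also, your ``or collapses to a tautology'' clause in the R2 step is never actually needed, since both new regions are always determined by the two new crossing relations.
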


\begin{proof}
We check the Reidemeister moves one by one. According to M. Polyak {\cite{P}}, for oriented link diagrams, we need to check Reidemeister move one, and Reidemeister move two and three as follows.

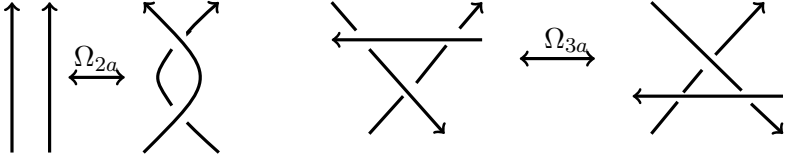
\begin{figure}[h]
\begin{tikzpicture}[scale=.25,mine/.style={font=\itshape\large}]
\draw[very thick]  [->] (-7,0) -- (-7,8);
\draw[very thick]  [->] (-5,0) -- (-5,8);

\draw[very thick]  [<->] (-4,4) -- (-1,4);
\draw (-2.5,5) node[mine] {$\Omega_{2a}$};

\draw[very thick] [<-] (0,8) .. controls (4,4) and (4,4) .. (0,0);
\draw[very thick] [<-] (4,8) .. controls (2.5,6.7) and (2,6)  .. (2.5,6.5);
\draw[very thick]  (1.5,5.5) .. controls (0.5,4) and (0.5,4)  .. (1.5,2.5);
\draw[very thick]  (4,0) .. controls (3,0.9)   .. (2.3,1.6);

\draw[very thick]  [<-] (10,6) -- (18,6);
\draw[very thick]   (10,8) -- (11.3,6.5);
\draw[very thick]  [->] (12,5.5) -- (16,1);

\draw[very thick][<-] (18,8) -- (16.8,6.6);
\draw[very thick] (16.1,5.7) -- (14.5,3.7);
\draw[very thick] (13.8,3) -- (12,1);

\draw[very thick]  [<->] (20,5) -- (24,5);
\draw[very thick] (22.5,6) node[mine] {$\Omega_{3a}$};

\draw[very thick][<-] (26,3) -- (34,3);
\draw[very thick] (27,8) -- (31.8,3.3);
\draw[very thick] [->]  (32.3,2.6) -- (34,1);
\draw[very thick][<-] (33,8) -- (30.4,5.2);
\draw[very thick] (29.7,4.4) -- (28.7,3.2);
\draw[very thick] (28.4,2.7) -- (27,1);
\end{tikzpicture}
\caption{Reidemeister move two and three.}
\label{fig2}
\end{figure}

First, for Reidemeister move one invariance, suppose we have knot diagrams $D,D'$ as in Fig. \ref{fig3}. The regions are assigned with generators $a,b,c,\cdots $. $LT(D)=\{a,b, \cdots \mid r_1,r_2 \cdots  \}$. Then in $D'$, the generators satisfy $b+ax+bxy+cy=0$, hence $LT(D')=\{c,a,b, \cdots \mid b+ax+by+cxy=0, r_1,r_2 \cdots  \}$. By $T_3$ transformation invariance, $LT(D') \cong LT(D)$. For other cases of Reidemeister move one, the proofs are the same.

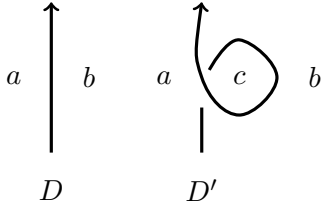
\begin{figure}[h]
\begin{tikzpicture} [scale=.5,mine/.style={font=\itshape\large}]

\draw[very thick] [<-] (-1,2) -- (-1,-2);
\draw[very thick] (-2,0) node[mine] {$a$};
\draw[very thick] (0,0) node[mine] {$b$};
\draw[very thick] (-1,-3) node[mine] {$D$};
\draw[very thick] [<-] plot[smooth, tension=.7] coordinates {(3,2) (3,0) (4,-1) (5,0) (4,1) (3.2,0.2)};
\draw[very thick]  plot[smooth, tension=.7] coordinates {(3,-0.8) (3,-2)};
\draw[very thick] (2,0) node[mine] {$a$};
\draw[very thick] (6,0) node[mine] {$b$};
\draw[very thick] (4,0) node[mine] {$c$};
\draw[very thick] (3,-3) node[mine] {$D'$};
\end{tikzpicture}
\caption{Reidemeister move one invariance.}
\label{fig3}
\end{figure}

For Reidemeister move two invariance, suppose we have knot diagrams $D,D'$ as in Fig. \ref{fig4}. The regions are assigned with generators $a,b,c,\cdots $. $LT(D)=\{a,b, c, \cdots \mid r_1,r_2 \cdots  \}$. Then in $D'$, at the lower crossing, we have relation $w+ax+b'xy+cy=0$, and at the upper crossing we have $w+ax+bxy+cy=0$. This implies that $b=b'$. Since $w=-ax-bxy-cy$ and \begin{gather*} LT(D')=\{w,b',a,b,c \cdots \mid w+ax+bxy+cy=0,w+ax+b'xy+cy=0, r_1,r_2 \cdots  \}, \end{gather*} we can use $T_3$ transformation to delete $w,b'$ and those two relations. $LT(D') \cong LT(D)$.

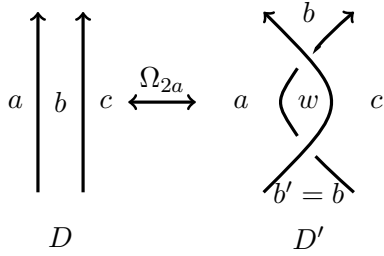
\begin{figure}[h]
\begin{tikzpicture}[scale=0.3,mine/.style={font=\itshape\large}]
\draw[very thick]  [->] (-10,0) -- (-10,8);
\draw[very thick]  [->] (-8,0) -- (-8,8);

\draw[very thick]  [<->] (-6,4) -- (-3,4);
\draw (-4.5,5) node[mine] {$\Omega_{2a}$};
\draw (-9,-2) node[mine] {$D$};
\draw (-11,4) node[mine] {$a$};
\draw (-9,4) node[mine] {$b$};
\draw (-7,4) node[mine] {$c$};

\draw[very thick] [<-] (0,8) .. controls (4,4) and (4,4) .. (0,0);
\draw[very thick] [<-] (4,8) .. controls (2.5,6.7) and (2,6)  .. (2.5,6.5);
\draw[very thick]  (1.5,5.5) .. controls (0.5,4) and (0.5,4)  .. (1.5,2.5);
\draw[very thick]  (4,0) .. controls (3,0.9)   .. (2.3,1.6);

\draw (2,-2) node[mine] {$D'$};
\draw (-1,4) node[mine] {$a$};
\draw (2,4) node[mine] {$w$};
\draw (5,4) node[mine] {$c$};
\draw (2,0) node[mine] {$b'=b$};
\draw (2,8) node[mine] {$b$};

\end{tikzpicture}
\caption{Reidemeister move two invariance.}
\label{fig4}
\end{figure}

For Reidemeister move three invariance, suppose we knot diagrams $D,D'$ as in Fig. {\ref{fig5}}. The regions are assigned with generators $X,Y,Z,\cdots $. $LT(D)=\{X,Y,Z,W,U,V \cdots \mid r_1,r_2 \cdots  \}$.

For $D$, there are three crossings hence we have three relations \begin{gather*} r_1:xyX+yS+xY+Z=0, \\ r_2: xyX+yS+xW+U=0,\\ r_3:xyZ+yS+xV+U=0.\end{gather*} Regard $S,U,V$ as unknowns, we get \begin{gather*}S = -(Xxy+Yx+Z)/y,\\ U = -Wx+Yx+Z,\\ V = Xy-Zy+W. \end{gather*}

Since $U,V,S$ are determined by $X,Y,Z,W$, by Tietze transformation, we can delete $S, r_1$, in the other relations replace $S$ by $-(Xxy+Yx+Z)/y$, then $r_2,r_3$ become \begin{gather*} \overline{r}_1: U = -Wx+Yx+Z,\\  \overline{r}_2: V = Xy-Zy+W. \end{gather*} Then we get a new presentation from $LT(D)=\{X,Y,Z,W,U,V \cdots \mid r_1,r_2 \cdots  \}$ by deleting $S, r_1, r_2,r_3$ and adding $\overline{r}_1, \overline{r}_2$.

Similarly, for $D'$, there are three crossings hence we have three relations \begin{gather*} r_1': Wxy+Tx+U'y+V'=0,\\ r_2': Yxy+Tx+Zy+V'=0,\\ r_3': Yxy+Tx+Xy+W=0. \end{gather*} Regarding $S,U',V'$ as unknowns, we get \begin{gather*} S = -(Yxy+Xy+W)/x,\\ U' = -Wx+Yx+Z,\\ V' = Xy-Zy+W. \end{gather*} Notice that the formulas for $U', V'$ and $U,V$ are the same. Hence we can also get a new presentation from $LT(D')=\{X,Y,Z,W,U',V' \cdots \mid r_1',r_2' \cdots  \}$ by deleting $S, r_1', r_2',r_3'$ and adding $\overline{r}_1', \overline{r}_2'$, where \begin{gather*} \overline{r}_1': U' = -Wx+Yx+Z, \\ \overline{r}_2': V' = Xy-Zy+W. \end{gather*}

Since after Tietze transformations $LT(D)$ and $LT(D')$ have the same presentation, they are isomorphic.

\begin{figure}[h]
\begin{tikzpicture}[scale=0.3,mine/.style={font=\itshape\large}]
\draw[very thick]  [<-] (10,6) -- (18,6);
\draw[very thick]   (10,8) -- (11.3,6.5);
\draw[very thick]  [->] (12,5.5) -- (16,1);

\draw[very thick][<-] (18,8) -- (16.8,6.6);
\draw[very thick] (16.1,5.7) -- (14.5,3.7);
\draw[very thick] (13.8,3) -- (12,1);

\draw[very thick] (14,-1) node[mine] {$D$};
\draw[very thick] (9,7) node[mine] {$X$};
\draw[very thick] (14,7) node[mine] {$Y$};
\draw[very thick] (19,7) node[mine] {$Z$};
\draw[very thick] (11,4) node[mine] {$W$};
\draw[very thick] (14,5) node[mine] {$T$};
\draw[very thick] (17,4) node[mine] {$V$};
\draw[very thick] (14,1) node[mine] {$U$};

\draw[very thick]  [<->] (20,5) -- (24,5);
\draw[very thick] (22.5,6) node[mine] {$\Omega_{3a}$};

\draw[very thick][<-] (26,3) -- (34,3);
\draw[very thick] (27,8) -- (31.8,3.3);
\draw[very thick] [->]  (32.3,2.6) -- (34,1);
\draw[very thick][<-] (33,8) -- (30.4,5.2);
\draw[very thick] (29.7,4.4) -- (28.7,3.2);
\draw[very thick] (28.4,2.7) -- (27,1);
\draw[very thick] (30,-1) node[mine] {$D$};
\draw[very thick] (27,6) node[mine] {$X$};
\draw[very thick] (30,7) node[mine] {$Y$};
\draw[very thick] (33,6) node[mine] {$Z$};
\draw[very thick] (30.2,3.8) node[mine] {$S$};
\draw[very thick] (26,2) node[mine] {$W$};
\draw[very thick] (30,1) node[mine] {$U'$};
\draw[very thick] (34,2) node[mine] {$V'$};
\end{tikzpicture}

\caption{Reidemeister move three invariance.}
\label{fig5}
\end{figure}
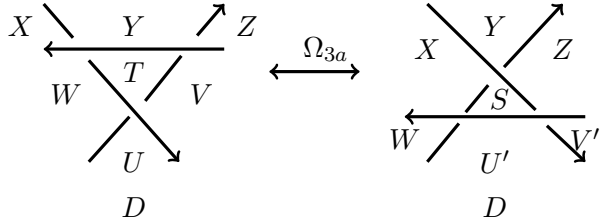

\end{proof}

The above construction is similar to the Alexander quandle. Likewise, we can also get a matrix invariant. From a presentation of $LT(D)=\{x_1,x_2, \cdots \mid r_1,r_2 \cdots  \}$ derived from a link diagram $D$, we can get a representation matrix $M(D)$ as follows. Each column $i$ corresponds to a generator $x_i$, and each row corresponds to a relation $r_i$.

\begin{example} In Fig. {\ref{fig6}}, we get the equations \begin{gather*} b+xe+xyc+ya=0,\\ c+xe+xyd+ya=0,\\ d+xe+xyb+ya=0. \end{gather*} The representation matrix  is as follows.

\begin{figure}[h]
\begin{tikzpicture}
\foreach \brk in {0,1,2} {
\begin{scope}[rotate=\brk * 120]
\node[knot crossing, transform shape, inner sep=1.5pt] (k\brk) at (0,-1) {};
\end{scope}
}
\foreach \brk in {0,1,2} {
\pgfmathparse{int(Mod(\brk - 1,3))}
\edef\brl{\pgfmathresult}
\draw[thick,red](k\brk) .. controls (k\brk.4 north west) and (k\brl.4 north east) .. (k\brl.center);
\draw[thick,red][->]  (k\brk.center) .. controls (k\brk.16 south west) and (k\brl.16 south east) .. (k\brl);
}

\draw[thick] (0,0) node  {$a$};
\draw[thick] (0,1) node  {$b$};
\draw[thick] (0.8,-0.7) node  {$c$};
\draw[thick] (0,1) node  {$b$};
\draw[thick] (-0.8,-0.7) node  {$d$};
\draw[thick] (1.2,1.2) node  {$e$};

\end{tikzpicture}
\caption{Left hand trefoil knot.}
\label{fig6}
\end{figure}
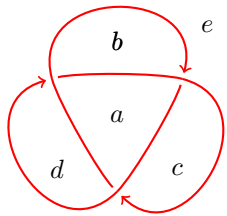

\begin{tikzpicture}
  \matrix (m) [matrix of math nodes,  nodes in empty cells]
  {
    \quad & a  & b & c & d & e  \\
    r_1   &  y &xy &   & 1 & x \\
    r_2   &  y & 1 &xy &   & x \\
    r_3   &  y &   & 1 &xy & x \\
   };
\end{tikzpicture}

\end{example}

Different link diagrams will provide different matrices. They are equivalent under the following operations.

(M1) Add a multiple of one row to another row and vice versa.

(M2) If one column contains only one nonzero invertible entry $a_{i,j}$ of the form $\pm x^ly^m$, one can simultaneously delete the ith row and jth column and vice versa.

(M3) If the ith row contains only two nonzero entries $a_{i,j}=f$ and $a_{i,k}=-f$, one can add the jth column to kth column, and vice versa.

(M4) multiply one row by $\pm x^ly^m$.

(M5) Exchange two rows (or two columns).

\begin{theorem}
The equivalence class of a representation matrix is a knot invariant.
\end{theorem}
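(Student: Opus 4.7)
The plan is to verify, for each of the three Reidemeister moves, that the matrices $M(D)$ and $M(D')$ associated with the two diagrams differ only by a finite sequence of the operations M1--M5. Since the preceding theorem already carried out the algebraic reduction at the level of $LT(D)$ via Tietze transformations, the present task reduces to translating each Tietze step into a corresponding matrix move. The basic observation is that a Tietze move of type T3 (respectively T4), in which a generator $z$ is introduced (removed) together with a relation of the form $z = F_k(\alpha_1,\alpha_2,\alpha_3)$, corresponds precisely to M2 (or its inverse): the new column for $z$ contains exactly one nonzero entry, which is a unit.

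For Reidemeister I, the matrix $M(D')$ has one more column (for the new region $c$) and one more row (for the new crossing relation $b + ax + by + cxy = 0$) than $M(D)$. The new column contains the single nonzero entry $xy$, which is a unit in $S[x^{\pm 1}, y^{\pm 1}]$, so one application of M2 removes both the new row and the new column, recovering $M(D)$.

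For Reidemeister II, $M(D')$ gains two columns (for $w$ and $b'$) and two rows (for the two new crossings). The column of $b'$ has a single entry $xy$, a unit, in one of the new rows; M2 removes that row and column. In the matrix that remains, the column of $w$ has the single entry $1$ in the other new row and no other nonzero entries, so a second application of M2 removes them. The outcome is $M(D)$.

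Reidemeister III is the most involved and mirrors the Tietze argument in the previous theorem. In $M(D)$, the column of the inner-region generator $S$ has entry $y$ in each of the three new rows $r_1, r_2, r_3$. Using M1 to replace $r_2$ by $r_2 - r_1$ and $r_3$ by $r_3 - r_1$ isolates a single nonzero entry $y$ in the $S$-column at row $r_1$; M2 then deletes $r_1$ together with the $S$-column. The two surviving rows are exactly the reduced relations $\bar r_1, \bar r_2$ from the previous theorem. An analogous sequence of M1 and M2 operations applied to $M(D')$ (using one of $r_1', r_2', r_3'$ to eliminate the inner generator $T$) produces two rows that coincide with $\bar r_1, \bar r_2$ under the identifications $U = U'$ and $V = V'$ established there, so $M(D) \sim M(D')$. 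The principal obstacle lies in the Reidemeister III bookkeeping: one must verify that the stated M1 row combinations indeed reproduce $\bar r_1, \bar r_2$ and that the parallel reduction of $M(D')$ terminates at the same pair of rows. The first two cases are routine applications of M2, so the invariance claim follows once the third is completed.
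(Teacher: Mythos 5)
Your overall strategy (reduce $M(D')$ to $M(D)$ by the moves M1--M5, one Reidemeister move at a time) is the same as the paper's, and your treatment of Reidemeister I agrees with it. But there is a genuine gap in your Reidemeister II step. You apply M2 directly to the column of $b'$, which requires that column to contain exactly one nonzero entry. That is not true in general: $b'$ is the lower half of the region that the new bigon splits, and that region typically borders other crossings of the diagram, so its column has further nonzero entries in the rows you hid behind ``$\cdots$''. Moreover, even in the special case where M2 were applicable, simply deleting the $b'$ column does not identify $b'$ with $b$ in the remaining rows, so the reduced matrix would not be $M(D)$: every relation at a crossing adjacent to the lower half of the split region would lose (or misplace) its entry. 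This identification is exactly what the move M3 exists for, and it is how the paper argues: first M1 replaces $r_2$ by $r_2-r_1$, producing a row whose only entries are $xy$ and $-xy$ in the columns $b'$ and $b$; then M3 merges those two columns and removes the row; only after that does the $w$ column (which initially has \emph{two} unit entries, one in each new row) reduce to a single unit entry, so that M2 can delete it, which is your second step. Without invoking M3 your chain of moves does not go through.

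For Reidemeister III your plan coincides with the paper's (use M1 to clear the inner-region column $S$, respectively $T$, delete it by M2, and compare the two surviving rows), and the variant $r_3\mapsto r_3-r_1$ instead of $r_3\mapsto r_3-r_2$ is harmless since the $S$-column entry is $y$ in all three rows. However, you explicitly leave the bookkeeping unverified, and you omit the unit rescalings (the paper's M4 step, dividing the surviving rows by $x$ and by $y$) that are needed before the two reduced matrices literally coincide in the columns $X,Y,Z,W,U,V$. As written, the hardest case is an outline with the decisive computation deferred, and the second case as stated is incorrect; both need to be repaired along the lines above for the proof to be complete.
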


\begin{proof}
For Reidemeister move one, suppose we have diagrams $D,D'$ as in Fig. \ref{fig3}. $D'$ has one more crossing and one more region, hence $LT(D')$ has one more generator and one more relator. So $M(D')$ has one more row. $M(D')$ can be obtained from $M(D)$ by operation M2.

For Reidemeister move two, suppose we have diagrams $D,D'$ as in figure \ref{fig4}. $D'$ has two more crossings and two more regions, hence $LT(D')$ has two more generators and two more relators.  $M(D')$ is as follows.

\begin{tikzpicture}
  \matrix (m) [matrix of math nodes,  nodes in empty cells]
  {
    \quad & a & b & b' & c & w & \cdots \\
    r_1 & x & xy &   & y  & 1 \\
    r_2 & x &  & xy  & y  & 1 \\
    \cdots   \\
   };
\end{tikzpicture}

Change $r_2$ to $r_2-r_1$, we have the following matrix.

\begin{tikzpicture}
  \matrix (m) [matrix of math nodes,  nodes in empty cells]
  {
    \quad & a & b & b' & c & w & \cdots \\
    r_1 & x & xy &   & y  & 1 \\
    r_2 &  & -xy & xy  &   &  \\
    \cdots   \\
   };
\end{tikzpicture}

Then we apply operation (M3) and get the following matrix.

\begin{tikzpicture}
  \matrix (m) [matrix of math nodes,  nodes in empty cells]
  {
    \quad & a & b  & c & w & \cdots \\
    r_1 & x & xy &  y  & 1 \\
    \cdots   \\
   };
\end{tikzpicture}

Then only one column contains $w$, we can apply operation (M2) and  obtain  $M(D)$.

For Reidemeister move two, suppose we have diagrams $D,D'$ as in Fig. \ref{fig5}. The matrices $M(D),M(D')$ are as follows.

\begin{tikzpicture}
\matrix (m) [matrix of math nodes,  nodes in empty cells]
  {
    \quad & X  & Y & Z  & W  & S & U & V & \cdots \\
    r_1   & xy & x & 1  &    & y &   & \\
    r_2   & xy &   &    & x  & y & 1 &  \\
    r_3   &    &   & xy &    & y & 1 & x \\
    \cdots   \\
   };
\end{tikzpicture}
\begin{tikzpicture}
  \matrix (m) [matrix of math nodes,  nodes in empty cells]
  {
    \quad & X  & Y & Z  & W  & T & U & V & \cdots \\
    r_1   &    &   &    & xy & x & y & 1 \\
    r_2   &    & xy& y  &    & x &   & 1 \\
    r_3   & y  & xy&    & 1  & x &   &   \\
    \cdots   \\
   };
\end{tikzpicture}

In $M(D)$, $r_3\mapsto r_3-r_2$, $r_2\mapsto r_2-r_1$. In $M(D')$, $r_1\mapsto r_1-r_2$, $r_2\mapsto r_2-r_3$

\begin{tikzpicture}
\matrix (m) [matrix of math nodes,  nodes in empty cells]
  {
    \quad & X  & Y & Z  & W  & S & U & V & \cdots \\
    r_1   & xy & x & 1  &    & y &   & \\
    r_2   &    & -x &-1  & x &   & 1 &  \\
    r_3   &-xy &   & xy & -x &   &   & x \\
    \cdots   \\
   };
\end{tikzpicture}
\begin{tikzpicture}
  \matrix (m) [matrix of math nodes,  nodes in empty cells]
  {
    \quad & X  & Y & Z  & W  & T & U & V & \cdots \\
    r_1   &    &-xy&-y  & xy &   & y &   \\
    r_2   &-y  &   & y  &-1  &   &   & 1 \\
    r_3   & y  & xy&    & 1  & x &   &   \\
    \cdots   \\
   };
\end{tikzpicture}

In $M(D)$, delete $r_1$ and column (S), $r_3\mapsto r_3/x$. In $M(D')$, delete $r_3$ and column (T), $r_1\mapsto r_1/y$.

\begin{tikzpicture}
\matrix (m) [matrix of math nodes,  nodes in empty cells]
  {
    \quad & X  & Y & Z  & W   & U & V & \cdots \\
    r_2   &    & -x &-1  & x  & 1 &  \\
    r_3   &-y &   & y & -1  &   & 1 \\
    \cdots   \\
   };
\end{tikzpicture}
\begin{tikzpicture}
  \matrix (m) [matrix of math nodes,  nodes in empty cells]
  {
    \quad & X  & Y & Z  & W   & U & V & \cdots \\
    r_1   &    &-x &-1  & x   & 1 &   \\
    r_2   &-y  &   & y  &-1   &   & 1 \\
    \cdots   \\
   };
\end{tikzpicture}

Hence they are isomorphic.

\end{proof}

In quandle theory, from a link diagram one can construct its Alexander quandle. Furthermore, one can construct presentation matrix and Alexander invariant {\cite{R}}. In the linear tridle case, we can also define some ideal invariant in a similar way. The presentation matrix is a $n\times (n+2)$ matrix. It has $(n+1)(n+2)/2$ submatrices. The greatest common divisor of determinants of those matrices is denoted by $\Delta (D)$ .

\begin{theorem}

Up to a multiple of $\pm x^ly^m$, $\Delta (D)$ is a knot invariant.
\end{theorem}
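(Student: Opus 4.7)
The strategy is to leverage Theorem~2: two representation matrices coming from diagrams of the same knot are connected by a finite sequence of moves M1--M5, so it suffices to verify that each such move alters the gcd of the $n\times n$ minors of an $n\times(n+2)$ matrix only by a unit $\pm x^ly^m$ in $R=S[x^{\pm1},y^{\pm1}]$. The easy cases are M1, M4, and M5: since every maximal minor uses every one of the $n$ rows, M1 (row addition) leaves each such minor invariant, M4 (row scaling by $\pm x^ly^m$) scales all of them by the same unit, and M5 (swap) changes only signs. For M3 (adding column $j$ to column $k$), an $n\times n$ submatrix with column set $C$ has its determinant unchanged when $k\notin C$ or $\{j,k\}\subset C$, and otherwise, by multilinearity in the $k$th column, its new determinant equals the old one plus $\pm$ the maximal minor on column set $(C\setminus\{k\})\cup\{j\}$. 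Hence every new maximal minor is an $R$-linear combination of old ones; by invertibility of the move the reverse also holds, so the ideal they generate, and with it its gcd, is preserved.

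The main obstacle is M2, where the matrix shrinks from $n\times(n+2)$ to $(n-1)\times(n+1)$. Let $u=a_{ij}=\pm x^ly^m$ be the unit pivot and let $M'$ denote the resulting matrix. I would first add $-(a_{ik}/u)$ times column $j$ to each column $k\neq j$; these auxiliary column-addition moves preserve the gcd of maximal minors by exactly the calculation used above for M3, which works for any scalar in $R$. Crucially, because column $j$ is zero off row $i$, the additions alter only row $i$ and in particular leave $M'$ untouched. In the resulting matrix, row $i$ is zero outside column $j$, so every $n\times n$ minor omitting column $j$ vanishes, while every $n\times n$ minor containing column $j$ expands along that column to $\pm u\cdot\det N'$ for an $(n-1)\times(n-1)$ minor $N'$ of $M'$; this correspondence is a bijection onto all $(n-1)\times(n-1)$ minors of $M'$, yielding $\Delta(M)=u\cdot\Delta(M')$ up to sign.

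Combining the cases, every move M1--M5 preserves $\Delta$ up to multiplication by a unit of the form $\pm x^ly^m$, proving the theorem.
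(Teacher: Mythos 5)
Your proposal is correct and follows the same route the paper intends: the paper's proof is only the single sentence ``this is proved by checking the moves one by one,'' and your argument supplies exactly those checks, reducing to the matrix moves M1--M5 via Theorem~2 and verifying that each one changes the gcd of the $n\times n$ minors only by a factor $\pm x^ly^m$ (with the standard clearing-of-row-$i$ trick handling M2). The determinant computations (invariance under row operations, the linear-combination argument for column additions, and the bijection of maximal minors giving $\Delta(M)=u\cdot\Delta(M')$) are all sound, so your write-up is a complete version of the argument the paper leaves implicit.
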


\begin{proof}
This is proved by check the moves one by one.
\end{proof}

\begin{example}  The right hand trefoil knot has presentation matrix as follows.

\begin{tikzpicture}
  \matrix (m) [matrix of math nodes,  nodes in empty cells]
  {
    \quad & a  & b & c & d & e  \\
    r_1   &  y &xy &   & 1 & x \\
    r_2   &  y & 1 &xy &   & x \\
    r_3   &  y &   & 1 &xy & x \\
   };
\end{tikzpicture}

Then $\Delta (D)$ is $1+(xy)^3$.

\end{example}

\begin{remark}
If we use a finite quotient of $S[x,x^{-1},y,y^{-1}]$,  we can also define finite linear quandles.
\end{remark}

\section{General tridle}

The linear tridle can be generalized to tridle. It can be described as follows. Given a link diagram $D$, we assign one generator for each region and one relator to each crossing. As in Fig. \ref{fig1}, the relator is an equation $F(a,b,c,d)=0$. It uniquely determines $a=F_1(b,c,d),b=F_2(c,d,a),c=F_3(d,a,b),d=F_4(a,b,c)$.

In Fig. \ref{fig5}, regard $S,U,V$ and $T,U',V'$ as unknowns, one solve the following equations.

\noindent \begin{gather*} F(Z,Y,X,S)=0,\\ F(U,W,X,S)=0,\\ F(U,V,Z,S)=0, \end{gather*} and

\noindent \begin{gather*}F(V',T,W,U')=0,\\F(V',T,Y,Z)=0,\\F(W,T,Y,X)=0. \end{gather*}

Then \begin{gather*} S=F_4(Z,Y,X),U=F_1(W,X,S)=F_1(W,X,F_4(Z,Y,X)), \\ V=F_2(Z,S,U)=F_2(Z,F_4(Z,Y,X),F_1(W,X,F_4(Z,Y,X))), \end{gather*} and

\noindent  \begin{gather*} T=F_2(Y,X,W),V'=F_1(T,Y,Z)=F_1(F_2(Y,X,W),Y,Z), \\ U'=F_4(V',T,W)=F_4(F_1(F_2(Y,X,W),Y,Z),F_2(Y,X,W),W). \end{gather*}

To get a knot invariant, we need $U=U', V=V'$. So the following conditions are satisfied.

\begin{gather}F_2(Z,F_4(Z,Y,X),F_1(W,X,F_4(Z,Y,X)))=F_1(F_2(Y,X,W),Y,Z) \\
F_1(W,X,F_4(Z,Y,X))=F_4(F_1(F_2(Y,X,W),Y,Z),F_2(Y,X,W),W).   \end{gather}

Given a link diagram, one can get a tridle $T(D)=\{a,b,c,d,\cdots \mid r_1,r_2,\cdots \}$. In general, given such a finite presentation, the tridle can be constructed as follows. Let $T_1$ be the set of generators. Suppose $T_n$ is constructed, let \begin{gather*} T_{n+1}=\{F_i(x,y,z)\mid i=1,2,3,4; x,y,z\in \cup_{i\leq n} T_i \}. \end{gather*} Then let \begin{gather*} F_2(Z,F_4(Z,Y,X),F_1(W,X,F_4(Z,Y,X)))\sim F_1(F_2(Y,X,W),Y,Z)\\ F_1(W,X,F_4(Z,Y,X))\sim F_4(F_1(F_2(Y,X,W),Y,Z),F_2(Y,X,W),W)\end{gather*}  for any $X,Y,Z,W\in \cup_{i=1}^{\infty } T_i$. Let $T(D)=\cup_{i=1}^{\infty } T_i /\sim$.

Here are some examples. The easiest solutions for $F$ are linear functions. Actually, $F(a,b,c,d)=a+bx+xyc+yd$ is the most general homogenous linear solution. One can also find non-homogenous linear solutions. For example, if $F(a,b,c,d)=a+b+c+d-k$ then we get a trivial knot invariant.

There are also nonlinear solutions, for example, $F(a,b,c,d)=ad^{-1}cb^{-1}-1$ or $F(a,b,c,d)=ad^{-1}-cb^{-1}$, we get the usual fundamental group with Dehn presentation. If $F(a,b,c,d)=ad^{-1}cb^{-1}-k$ or $F(a,b,c,d)=ad^{-1}-kbc^{-1}$, where $k$ commutes with all generators, we get a knot invariant similar to the fundamental group. However, this invariant has one extra parameter $k$.

One can also get degree three examples: $F(a,b,c,d)=ad^{-1}c-b$. But it is just the knot group. As mentioned before, there are also finite tridles. For example, a checkerboard coloring defines a tridle has two elements.

\section*{Acknowledgments}
The author is supported by the grant NFSC  No. 11271058.


\begin{thebibliography}{0}
\bibitem{A}J. W. Alexander, Topological invariants of knots and links, {\it Trans. Amer. Math. Soc.}. {\bf 30 (2)} (1928) 275¨C306.

\bibitem{H}P. Freyd, D. Yetter, J. Hoste, W. B. R. Lickorish, K. Millett, and A. Ocneanu, A new polynomial invariant of knots and links, {\it Bull. Amer. Math. Soc}. {\bf 12} (1985), 239-249.

\bibitem{J}V. F. R. Jones, Hecke algebra representations of braid groups and link polynomials,  {\it Ann. of Math. Second Series}. {\bf 126} (1987), No. 2, 335-388.

\bibitem{K}L. Kauffman, State models and the Jones polynomial, {\it Topology}. {\bf  26} (1987), 395-407.

\bibitem{P}M. Polyak,  Minimal generating sets of Reidemeister moves, {\it J. Quantum Topol}. {\bf 1} (2010), no. 4, 399¨C411.

\bibitem{R}D. Rolfsen, {\it Knots and links}, ( Publish or Perish Inc., Berkeley, 1976).
\end{thebibliography}
\end{document}